\documentclass{article}%
\usepackage{amsmath}%
\setcounter{MaxMatrixCols}{30}%
\usepackage{amsfonts}%
\usepackage{amssymb}%
\usepackage{graphicx}
\providecommand{\U}[1]{\protect\rule{.1in}{.1in}}
\newtheorem{theorem}{Theorem}

\newtheorem{lemma}[theorem]{Lemma}

\newenvironment{proof}[1][Proof]{\noindent\textbf{#1.} }{\ \rule{0.5em}{0.5em}}
\begin{document}
\pdfoutput=1

\title{Asymptotic Properties of Stieltjes Constants}
\author{Krzysztof Dominik Ma\'{s}lanka\\Polish Academy of Sciences\\Institute for the History of Science\\Nowy \'{S}wiat 72, 00-330 Warsaw, Poland\\e-mail krzysiek2357@gmail.com}
\maketitle

\begin{abstract}
We present a new asymptotic formula for the Stieltjes constants which is both
simpler and more accurate than several others published in the literature (see
e.g. \cite{Fekih-Ahmed}, \cite{Knessl Coffey}, \cite{Paris}). More
importantly, it is also a good starting point for a detailed analysis of some
surprising regularities in these important constants.

Keywords: Stieltjes constants, saddle point method, N\o rlund--Rice
integral\bigskip

\end{abstract}

\begin{flushright}
Mathematicians should look anew at old concepts

in solitude and in absolute, childlike innocence.

Alexandre Grothendieck (1928-2014)

\textit{R\'{e}coltes et Semailles} (unpublished text)\bigskip
\end{flushright}

\section{Introduction}

The Stieltjes constants $\gamma_{n}$ are essentially coefficients of the
Laurent series expansion of the Riemann zeta function around its only simple
pole at $s=1$:%
\begin{equation}
\zeta(s)=\frac{1}{s-1}+%
{\displaystyle\sum\limits_{n=0}^{\infty}}
\frac{\left(  -1\right)  ^{n}}{n!}\gamma_{n}\left(  s-1\right)  ^{n}%
\label{ZetaExpansion}%
\end{equation}

It is commonly believed that they are irrational numbers, and even
transcendental, however no rigorous proof of this has been given
\cite{Maslanka Wolf}. High precision numerical computations of them are quite
a challenge (see \cite{Maslanka Kolezynski} and references therein). A common
and frequently cited view is that "for large $n$, the Stieltjes constants grow
rapidly in absolute value, and change signs in a complex pattern"
\cite{Wikipedia}. The first view is beyond any doubt, as illustrated in the
Figure 1 below.%
\begin{figure}[ptb]%
\centering
\includegraphics[height=0.4\textheight]
{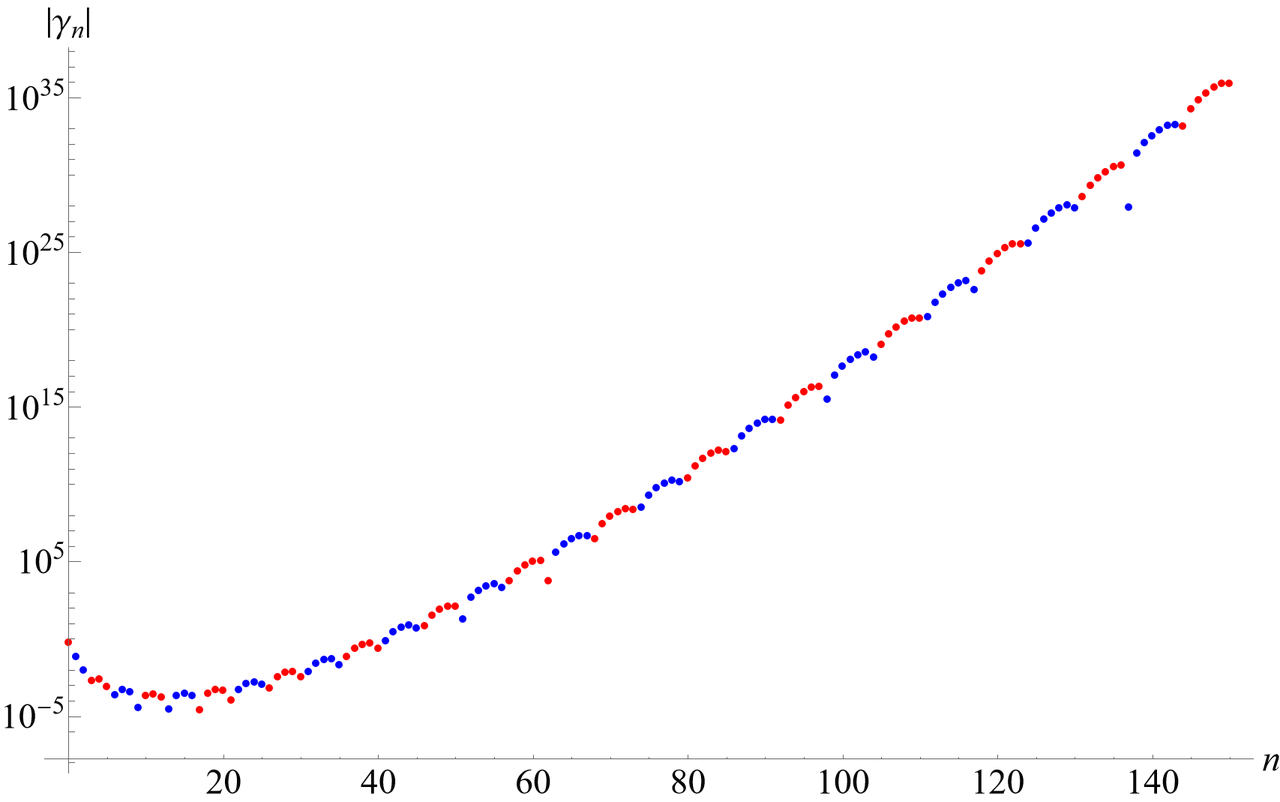}%
\caption{Absolute values of 150 initial Stieltjes coefficients $\gamma_{n}$.
Global, fast growing trend is evident. Oscillations of increasing amplitude
and decreasing frequency superimposed on this trend are visible. Red dots mean
positive values, blue dots mean negative values. The scale on the vertical
axis is logarithmic.}%
\end{figure}

In this paper, however, we will show that the second view is incorrect: not
only the signs of the Stieltjes constants, but their values also show amazing regularities.

There are three asymptotic formulas for these constants in the literature
(\cite{Knessl Coffey}, \cite{Fekih-Ahmed}, \cite{Paris}). We believe that the
one presented in this paper is definitely simpler than the others. It is also
more accurate. In particular, it recreates correctly the sign of $\gamma_{n}$
for the particular value of $n=137$ which is usually troublesome for
asymptotic formulas. Most importantly, this formula can be a starting point
for the analysis of the above-mentioned surprising regularities of Stieltjes
constants:%
\begin{equation}
\gamma_{n}\sim\sqrt{\frac{2}{\pi}}n!\operatorname{Re}\frac{\Gamma\left(
s_{n}\right)  e^{-cs_{n}}}{\left(  s_{n}\right)  ^{n}\sqrt{n+s_{n}+\frac{3}%
{2}}}\label{asymptotics 1}%
\end{equation}
where $s_{n}$ is the saddle point (see below):%
\begin{equation}
s_{n}=\frac{n+\frac{3}{2}}{W\left(  \frac{n+\frac{3}{2}}{2\pi i}\right)
}\label{Saddles 1}%
\end{equation}
In formula (\ref{asymptotics 1}) $c\equiv\ln\left(  2\pi i\right)  $ is a
complex constant and $W$ is the Lambert function (sometimes called the omega
function or product logarithm, see \cite{Wolfram Lambert}).

The basic tool is, as usual in such computations, the saddle point method
whereas the starting point is a certain alternating sum, which, due to the
still little known N\o rlund-Rice formula, can be converted into an integral
over the complex contour. As will be shown subsequently, global properties of
this integral clearly suggest using the saddle point method.

\section{Algorithm for calculating Stieltjes constants}

This work is a natural continuation of the previous one \cite{Maslanka
Kolezynski}. In that work, certain numerically efficient formula for Stieltjes
constants was given. In the present work, we will use this formula to derive a
new, effective formula for asymptotics for these important constants. As it
was done in \cite{Maslanka Kolezynski}, we will use polynomial interpolation
for the (regularized) Riemann zeta function $\varphi(s)$:%

\begin{equation}
\varphi(s):=\left\{
\genfrac{}{}{0pt}{}{\zeta(s)-\frac{1}{s-1}\qquad s\neq1}{\gamma\qquad
\qquad\qquad s=1}%
\right. \label{Regularized zeta}%
\end{equation}
where $\gamma$ is the Euler constants which stems from the appropriate limit.
In the mentioned interpolation, certain coefficients $\alpha_{k}$ appear
naturally, defined as follows:%
\begin{equation}
a_{k}(\varepsilon)=%
{\displaystyle\sum\limits_{j=0}^{k}}
\left(  -1\right)  ^{j}\binom{k}{j}\varphi(1+j\varepsilon)\label{a_k}%
\end{equation}
where $\varepsilon$ is certain real, not necessarily small number. (In what
follows we shall generally drop for simplicity this dependence in denotations:
$a_{k}(\varepsilon)\equiv a_{k}$.) Then, after some elementary computations,
we get:%
\begin{equation}
\fbox{$\gamma _{n}=\frac{n!}{\varepsilon ^{n}}\sum\limits_{k=n}^\infty
\frac{(-1)^{k}}{k!}\alpha _{k}S_{k}^{(n)}$}  \label{gamma gen}
\end{equation}
where $S_{k}^{(n)}$ are signed Stirling numbers of the first kind (see
\cite{Wolfram Stirling}). Formula (\ref{gamma gen}) is particularly well-suited
for numerical computations provided one has precomputed equidistant, high
precision values of $\varphi(s)$ in $s=1,1+\varepsilon,1+2\varepsilon,...$
(See paragraph IV of \cite{Maslanka Kolezynski} for all details.)

\section{Behavior of coefficients $a_{k}$}

Formula (\ref{a_k}) has special form of an alternating sum with binomial
coefficients. This form suggests using the N\o rlund--Rice integral which is a
powerful tool for dealing with such sums (see e.g. \cite{Flajolet Sedgewick}).

\begin{lemma}
Let $\varphi(s)$ be holomorphic in the half-plane $\Re(s)\geq n_{0}-\frac
{1}{2}$. Then the finite differences of the sequence $\{\varphi(k)\}$ admit
the integral representation:%
\begin{equation}%
{\displaystyle\sum\limits_{k=n_{0}}^{n}}
\left(  -1\right)  ^{k}\left(
\begin{array}
[c]{c}%
n\\
k
\end{array}
\right)  \varphi(k)=\frac{\left(  -1\right)  ^{n}}{2\pi i}\underset{C}{%
{\displaystyle\oint}
}\varphi(s)\frac{n!}{s(s-1)...(s-n)}\label{Noerlund-Rice}%
\end{equation}
where the contour of integration encircles the integers $\{n_{0},...,n\}$ in a
positive direction and is contained in $\Re(s)\geq n_{0}-\frac{1}{2}$.
\end{lemma}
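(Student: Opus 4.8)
The plan is to prove the Nörlund--Rice integral representation by the residue theorem. The key observation is that the rational function
\[
R(s)=\frac{n!}{s(s-1)\cdots(s-n)}
\]
has simple poles exactly at the integers $s=0,1,\dots,n$, and nowhere else. First I would compute the residue of $\varphi(s)R(s)$ at a generic pole $s=k$ with $n_{0}\le k\le n$. Since $R(s)$ has a simple pole there, the residue is $\varphi(k)$ times the residue of $R$ at $k$, and the latter is a standard computation: writing $\prod_{j=0}^{n}(s-j)$ and differentiating (or simply cancelling the $(s-k)$ factor) gives
\[
\operatorname*{Res}_{s=k}R(s)=\frac{n!}{\prod_{j\neq k}(k-j)}=\frac{n!}{k!\,(-1)^{n-k}(n-k)!}=(-1)^{n-k}\binom{n}{k}.
\]

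Next I would invoke the residue theorem on the closed contour $C$. Because $C$ is positively oriented and encircles precisely the integers $\{n_{0},\dots,n\}$ while lying in the region $\Re(s)\ge n_{0}-\tfrac12$ where $\varphi$ is holomorphic, the only singularities enclosed are these simple poles of $R$; $\varphi$ contributes none. Hence
\[
\frac{1}{2\pi i}\oint_{C}\varphi(s)R(s)\,ds=\sum_{k=n_{0}}^{n}(-1)^{n-k}\binom{n}{k}\varphi(k).
\]
Multiplying both sides by $(-1)^{n}$ and pulling the sign inside the sum turns $(-1)^{n}(-1)^{n-k}$ into $(-1)^{-k}=(-1)^{k}$, which is exactly the alternating finite-difference sum on the left-hand side of (\ref{Noerlund-Rice}). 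This matches the claimed formula.

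The only genuine points requiring care, rather than true obstacles, are the hypotheses that make the contour argument legitimate. I must verify that $\varphi$ is holomorphic throughout the enclosed region so that it introduces no extra residues --- this is guaranteed by the assumption that $\varphi$ is holomorphic for $\Re(s)\ge n_{0}-\tfrac12$ together with the requirement that $C$ stay inside that half-plane. I should also confirm that the contour can be drawn to enclose exactly the integers $\{n_{0},\dots,n\}$ and no others; since these poles are isolated and unit-spaced, a contour hugging the real segment $[n_{0},n]$ within a horizontal strip of half-width less than $\tfrac12$ suffices. The residue computation itself is routine, so the heart of the argument is simply the correct bookkeeping of signs, which the factor $(-1)^{n}$ in front of the integral is designed to absorb.
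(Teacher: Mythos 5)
Your proposal is correct and follows essentially the same route as the paper, which also invokes the Cauchy residue theorem on the contour $C$ and identifies the sum of residues at $s=n_{0},\dots,n$ with the alternating binomial sum. The only difference is that you spell out the residue computation $\operatorname*{Res}_{s=k}R(s)=(-1)^{n-k}\binom{n}{k}$ and the sign bookkeeping explicitly, which the paper leaves implicit.
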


\begin{proof}
According to the Cauchy residue theorem the contour integral on the right is
the sum of the residues of the integrand at $s=n_{0},...,n$ which is just
equal to sum on the left\footnote{Donald Knuth popularized this formula and
attributed it to American engineer Stephen O. Rice, pioneer in the
applications of probability techniques to engineering problems (1907-1986).
Knuth did it in one of the problem tasks at the end of one of the chapters of
his famous work\textit{\ }\cite{Knuth}. However, much earlier this formula was
known to Danish mathematician Niels Erik N\o rlund (1885-1981), who included
it in his extensive classic treatise \cite{Noerlund}. Incidentally, the
mentioned Niels Erik N\o rlund was the brother of Margrethe n\'{e}e Norlund,
later wife of the famous physicist Niels Bohr.}.
\end{proof}%

\begin{figure}[ptb]%
\centering
\includegraphics[height=0.12\textheight]
{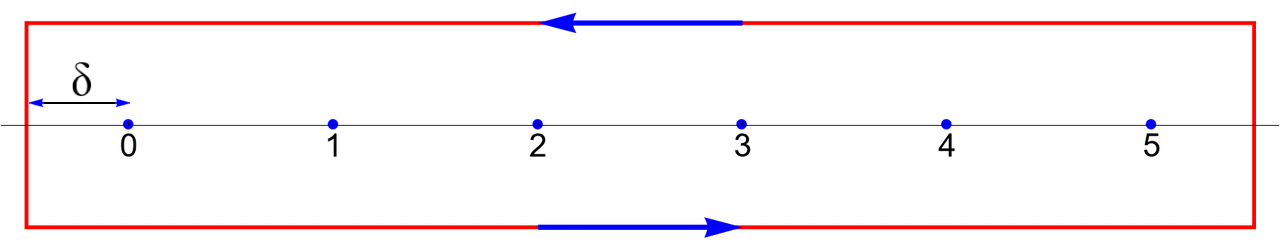}%
\caption{Rectangular contour $C$\ of integration for the right hand side of
(\ref{Noerlund-Rice}) for particular value $k=5$ encircling points
$0,1,...,5$. This shape is especially well suited for numerical
investigations.}%
\end{figure}

However, before applying the above Lemma it is convenient to make several
elementary transformations in (\ref{a_k}).%
\begin{align*}
a_{k}  & =\gamma+%
{\displaystyle\sum\limits_{j=1}^{k}}
\left(  -1\right)  ^{j}\binom{k}{j}\varphi(1+j\varepsilon)=\\
& =\gamma+%
{\displaystyle\sum\limits_{j=1}^{k}}
\left(  -1\right)  ^{j}\binom{k}{j}\zeta(1+j\varepsilon)-\frac{1}{\varepsilon}%
{\displaystyle\sum\limits_{j=1}^{k}}
\frac{\left(  -1\right)  ^{j}}{j}\binom{k}{j}%
\end{align*}
The last sum is%
\[%
{\displaystyle\sum\limits_{j=1}^{k}}
\frac{\left(  -1\right)  ^{j}}{j}\binom{k}{j}=-\gamma-\psi^{(0)}(k+1)=-H_{k}%
\]
where $\psi^{(0)}(s)$ is the polygamma function and $H_{k}\equiv\sum_{i=1}%
^{k}1/i$ is the $k^{\text{th}}$ harmonic number. Finally we get:%
\begin{equation}
a_{k}=\gamma+\frac{H_{k}}{\varepsilon}+%
{\displaystyle\sum\limits_{j=1}^{k}}
\left(  -1\right)  ^{j}\binom{k}{j}\zeta(1+j\varepsilon)\label{a_k zeta}%
\end{equation}

Now choosing the rectangular contour of integration (Figure 1) and applying
the above Lemma (\ref{Noerlund-Rice}) to (\ref{a_k}) we get:%

\begin{equation}
a_{k}=\frac{\left(  -1\right)  ^{k}k!}{2\pi i}\left(
{\displaystyle\int\limits_{-\delta+i\delta}^{-\delta-i\delta}}
f_{k}ds+%
{\displaystyle\int\limits_{-\delta-i\delta}^{k+\delta-i\delta}}
f_{k}ds+%
{\displaystyle\int\limits_{k+\delta-i\delta}^{k+\delta+i\delta}}
f_{k}ds+%
{\displaystyle\int\limits_{k+\delta+i\delta}^{-\delta+i\delta}}
f_{k}ds\right) \label{alpha_k integral 0}%
\end{equation}
where the integrand is:%

\[
f_{k}\equiv f_{k}(s,\varepsilon)=\frac{\varphi(1+s\varepsilon)}{%
{\displaystyle\prod\limits_{i=0}^{k}}
(s-i)}%
\]
and $\varphi$ is the regularized zeta function (\ref{Regularized zeta}) and
$\delta$ is positive parameter. (Typically $\delta=\frac{1}{2}$, see Fig. 2.)

Deforming the rectangular contour of integration to a vertical line
$\operatorname{Re}s=\frac{1}{2}$\ and a large semicircle on the right and
performing the integral along vertical line only, that is neglecting
contribution from the large semicircle, which tends to zero, we get:%

\[
a_{k}=\frac{\left(  -1\right)  ^{k}k!}{2\pi i}%
{\displaystyle\int\limits_{+\frac{1}{2}+i\infty}^{+\frac{1}{2}-i\infty}}
f_{k}(s,\varepsilon)ds
\]

After applying functional equation for the Riemann zeta function (see e.g.
\cite{Edwards}, p. 12-16)\footnote{Such a trick to use the functional equation
for the Riemann zeta function and then perform change of variable
$s\rightarrow-s$ was inspired by the work \cite{Flajolet Vepstas}, cf.
equations (17), (18) and the corresponding comment.}:%
\begin{equation}
\zeta(1+s\varepsilon)=\pi^{\frac{1}{2}+s\varepsilon}\frac{\Gamma
(-\frac{s\varepsilon}{2})}{\Gamma(\frac{1+s\varepsilon}{2})}\zeta
(-s\varepsilon)\label{Functional equation}%
\end{equation}
we get%

\[
a_{k}=\gamma+\frac{H_{k}}{\varepsilon}+\frac{\left(  -1\right)  ^{k}k!}{2\pi
i}%
{\displaystyle\int\limits_{+\frac{1}{2}+i\infty}^{+\frac{1}{2}-i\infty}}
\frac{\pi^{\frac{1}{2}+s\varepsilon}\frac{\Gamma(-\frac{s\varepsilon}{2}%
)}{\Gamma(\frac{1+s\varepsilon}{2})}\zeta(-s\varepsilon)}{%
{\displaystyle\prod\limits_{i=0}^{k}}
(s-i)}ds
\]

Performing change of variable $s\rightarrow-s$ yields:%

\[
a_{k}=\gamma+\frac{H_{k}}{\varepsilon}-\frac{\left(  -1\right)  ^{k}k!}{2\pi
i}%
{\displaystyle\int\limits_{-\frac{1}{2}-i\infty}^{-\frac{1}{2}+i\infty}}
\frac{\pi^{\frac{1}{2}-s\varepsilon}\frac{\Gamma(\frac{s\varepsilon}{2}%
)}{\Gamma(\frac{1-s\varepsilon}{2})}\zeta(s\varepsilon)}{%
{\displaystyle\prod\limits_{i=0}^{k}}
(-s-i)}ds
\]

Using elementary identity valid for integer $k$:%

\[%
{\displaystyle\prod\limits_{i=0}^{k}}
(-s-i)=-\left(  -1\right)  ^{k}%
{\displaystyle\prod\limits_{i=0}^{k}}
(s+i)
\]
and converting the product on the right into the Pochhammer symbol usually
denoted $(s)_{n}$:%

\[%
{\displaystyle\prod\limits_{i=0}^{k}}
(s+i)=\frac{\Gamma\left(  s+k+1\right)  }{\Gamma\left(  s\right)  }%
\equiv(s)_{k+1}%
\]
we get:%

\[
a_{k}=\gamma+\frac{H_{k}}{\varepsilon}+\frac{k!}{2\pi i}%
{\displaystyle\int\limits_{-\frac{1}{2}-i\infty}^{-\frac{1}{2}+i\infty}}
\pi^{\frac{1}{2}-s\varepsilon}\frac{\Gamma(\frac{s\varepsilon}{2})}%
{\Gamma(\frac{1-s\varepsilon}{2})}\frac{\Gamma\left(  s\right)  }%
{\Gamma\left(  s+k+1\right)  }\zeta(s\varepsilon)ds
\]
Now defining the integrand as:%

\begin{equation}
f_{k}(s,\varepsilon)=\pi^{\frac{1}{2}-s\varepsilon}\frac{\Gamma(\frac
{s\varepsilon}{2})}{\Gamma(\frac{1-s\varepsilon}{2})}\frac{\Gamma\left(
s\right)  }{\Gamma\left(  s+k+1\right)  }\zeta(s\varepsilon)\label{integrand}%
\end{equation}
we get%
\begin{align}
a_{k}  & =\gamma+\frac{H_{k}}{\varepsilon}+\frac{k!}{2\pi i}%
{\displaystyle\int\limits_{-\frac{1}{2}-i\infty}^{-\frac{1}{2}+i\infty}}
f_{k}(s,\varepsilon)ds=\label{a_k integral 1}\\
& =\gamma+\frac{H_{k}}{\varepsilon}+\frac{k!}{2\pi i}\left(
{\displaystyle\int\limits_{+\frac{1}{2}-i\infty}^{+\frac{1}{2}+i\infty}}
f_{k}(s,\varepsilon)ds-2\pi i\;\text{Res}\left(  f_{k}(s,\varepsilon
),0\right)  \right)
\end{align}
We can finally move the line of integration from $\operatorname{Re}s=-\frac
{1}{2}$ to $\operatorname{Re}s=+\frac{1}{2}$ and subtract the contribution
from residue of the integrand in $s=0$. It turns out that this residue is:%
\begin{equation}
\frac{\gamma\varepsilon+H_{k}}{\varepsilon k!}\label{Residue}%
\end{equation}
which miraculously cancels exactly the first and the second term in
(\ref{a_k integral 1})%
\begin{equation}
a_{k}=\frac{k!}{2\pi i}%
{\displaystyle\int\limits_{+\frac{1}{2}-i\infty}^{+\frac{1}{2}+i\infty}}
f_{k}(s,\varepsilon)ds\label{a_k integral 2}%
\end{equation}
It is convenient to introduce the following notation:%
\begin{align}
g_{k}(s,\varepsilon)  & \equiv\pi^{\frac{1}{2}-s\varepsilon}\frac{\Gamma
(\frac{s\varepsilon}{2})}{\Gamma(\frac{1-s\varepsilon}{2})}\frac{\Gamma\left(
s\right)  }{\Gamma\left(  s+k+1\right)  }\label{integrand 1}\\
f_{k}(s,\varepsilon)  & =g_{k}(s,\varepsilon)\zeta(s\varepsilon)\nonumber
\end{align}
%

\begin{figure}[ptb]%
\centering
\includegraphics[height=0.4\textheight]
{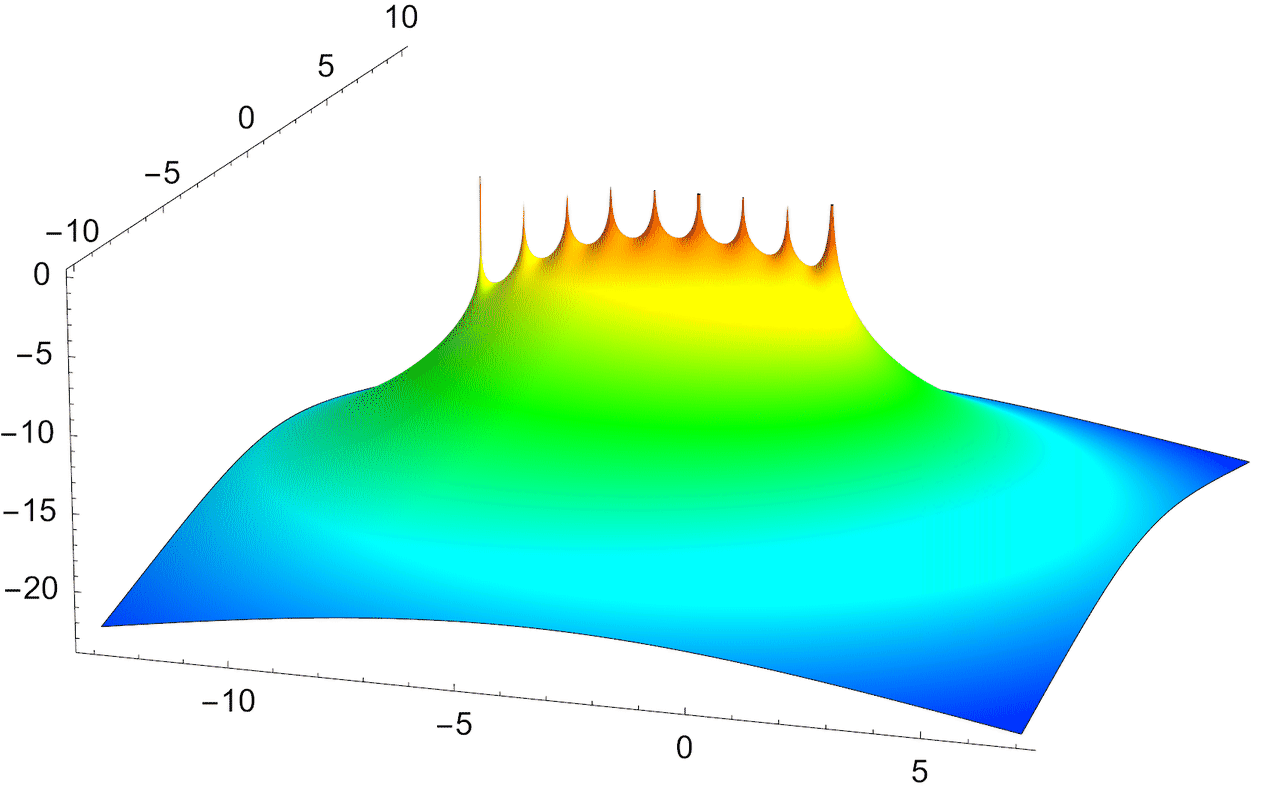}%
\caption{Absolute value of the integrand (\ref{integrand 1}) for $k=8$ and
$\varepsilon=2^{-5}$. Vertical scale is logarithmic for better visualisation.
The right half-plane of complex variable $s$ is free of singularities. Simple
poles in $s=0,-1,-2,...,-k$ due to factor $\Gamma(s)$ in (\ref{integrand 1})
are visible.}%
\end{figure}
The integrand in (\ref{integrand 1}) has several remarkable features. It is
free of singularities in the right half-plane and decays there exponentially
to zero. Hence, the vertical line of integration may be freely moved to the
right without any change of the integral. Therefore the integral is
well-suited for applying the saddle point method. Let us now remind the
following important result (see \cite{Erdelyi} for a very accessible
presentation of this method):

\begin{theorem}
The saddle-point method (or: Method of steepest descent). An integral
depending of some real parameter $\lambda$ may be approximated for large value
of this parameter as%
\begin{equation}
\int\tau(x)e^{\lambda\omega(x)}dx\sim\tau(x_{0})e^{\lambda\omega(x_{0})}%
\sqrt{-\frac{2\pi}{\lambda\omega^{\prime\prime}(x_{0})}},\quad\omega^{\prime
}(x_{0})=0\label{saddle point method}%
\end{equation}
(The solution $x_{0}$ of the equation $\omega^{\prime}(x_{0})=0$ is the saddle
point.)\footnote{Historical digression. We owe the original idea of this
method to Pierre Simon de Laplace (1774). Another contribution belongs to
Augustin Louis Cauchy (1829). In Bernhard Riemann's unpublished notes from
1863, this method is applied to hypergeometric functions. The final version
was published by Peter Debye (1909) who applied this method to Bessel
functions. Russian historians of mathematics recently reminded contribution of
Pavel Alexeevich Nekrasov, who (allegedly) discovered and used this method
independently a quarter of a century before Debye. I have no opinion on this
matter, since this Nekrasov was also a philosopher and used mathematics to
demonstrate the necessity of the tsarist regime and the need to maintain
secret services.}
\end{theorem}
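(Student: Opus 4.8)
The plan is to establish the asymptotic formula by the classical Laplace-type localization argument, adapted to the complex contour through the steepest-descent choice of path. The governing intuition is that for large $\lambda$ the factor $e^{\lambda\omega(x)}$ concentrates the mass of the integrand in a shrinking neighborhood of the point where $\operatorname{Re}\omega$ is largest along the contour; by hypothesis this point is the saddle $x_{0}$, where $\omega'(x_0)=0$, so that the first-order term in the expansion of $\omega$ vanishes and the competition is between the (negative) second-order term and the slowly varying amplitude $\tau$.

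First I would deform the contour, invoking Cauchy's theorem together with the analyticity of $\tau$ and $\omega$, so that it passes through $x_{0}$ along the \emph{direction of steepest descent}. Along this path $\omega(x)-\omega(x_{0})$ is real and negative, so that $\operatorname{Im}\omega$ stays stationary to leading order and no spurious oscillation survives. This deformation is exactly what fixes the branch and the phase of the square root appearing in the final formula; the sign of $-\omega''(x_{0})$ under the radical is inherited from the orientation of the steepest-descent tangent at the saddle.

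Next I would expand $\omega$ to second order about the saddle, writing $\omega(x)=\omega(x_{0})+\tfrac12\omega''(x_{0})(x-x_{0})^{2}+O\!\left((x-x_{0})^{3}\right)$, pull the constant $e^{\lambda\omega(x_{0})}$ out in front, replace the slowly varying amplitude by its value $\tau(x_{0})$ at the saddle, and substitute $x-x_{0}=t$. The integral then reduces to the Gaussian $\int e^{\tfrac{\lambda}{2}\omega''(x_{0})t^{2}}\,dt$. Extending the limits of this Gaussian to the whole rotated real axis — legitimate because the tails are exponentially small — and evaluating it produces the factor $\sqrt{-2\pi/(\lambda\omega''(x_{0}))}$, which assembles into the stated leading-order approximation (\ref{saddle point method}).

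The main obstacle, and the part that demands genuine care rather than formal manipulation, is the \emph{error control}. One must verify that (i) the portion of the contour outside a small disk $|x-x_{0}|<\delta$ contributes only an exponentially smaller amount, using that $\operatorname{Re}\omega$ drops strictly below $\operatorname{Re}\omega(x_{0})$ there; (ii) the neglected cubic and higher terms in the Taylor expansion of $\omega$, after the rescaling $t=u/\sqrt{\lambda}$, contribute only relative corrections of order $\lambda^{-1/2}$; and (iii) replacing $\tau(x)$ by $\tau(x_{0})$ introduces a comparable lower-order error. The standard device is to rescale by $\sqrt{\lambda}$ and apply Watson's lemma (or dominated convergence on the rescaled integrand), which simultaneously justifies the truncation and, if pushed further, yields the full asymptotic expansion whose first term is precisely (\ref{saddle point method}).
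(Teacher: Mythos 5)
The paper does not actually prove this theorem: it is quoted as a classical result, with the reader referred to Erd\'elyi's \emph{Asymptotic Expansions} for its justification, so there is no in-paper proof to compare against. Your outline — deformation onto the steepest-descent path through the saddle, quadratic expansion $\omega(x)=\omega(x_0)+\tfrac12\omega''(x_0)(x-x_0)^2+O\bigl((x-x_0)^3\bigr)$, Gaussian evaluation yielding the factor $\sqrt{-2\pi/\bigl(\lambda\omega''(x_0)\bigr)}$, and error control via the rescaling $t=u/\sqrt{\lambda}$ together with Watson's lemma — is precisely the standard argument found in that reference and is correct in all essentials, including the observation that the branch of the square root is fixed by the steepest-descent direction; it also rightly supplies the hypotheses (analyticity of $\tau$ and $\omega$, a single nondegenerate saddle with $\omega''(x_0)\neq 0$ dominating the contour) that the theorem's bare statement leaves implicit.
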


In our case the discrete index $k$ plays the role of parameter $\lambda$
although it is not just multiplying factor. It is evident that in order to
apply the above theorem to integral (\ref{integrand 1}) one has to choose
$\tau\equiv1$ and $\omega=\log\left(  f_{k}(s,\varepsilon)\right)  $. More
precisely:%
\begin{equation}
\omega_{k}(s,\varepsilon)\equiv\log\left(  f_{k}(s,\varepsilon)\right)
\label{ln integrand}%
\end{equation}

All the computations below are elementary but very tedious, so they were
performed and checked with the help of Wolfram Mathematica \cite{Wolfram
Mathematica}.

We shall also need the first and the second derivative of the integrand
(\ref{integrand 1}) with respect to complex variable $s$. Having these we can
compute derivatives of $\omega_{k}(s,\varepsilon)$ as:%
\begin{equation}
\frac{\partial}{\partial s}\omega_{k}(s,\varepsilon)=\frac{\frac{\partial
}{\partial s}f_{k}(s,\varepsilon)}{f_{k}(s,\varepsilon)}\equiv\frac
{f_{k}^{(1)}(s,\varepsilon)}{f_{k}(s,\varepsilon)}\label{omega 1 der}%
\end{equation}%
\begin{equation}
\frac{\partial^{2}}{\partial s^{2}}\omega_{k}(s,\varepsilon)=\frac
{\frac{\partial^{2}}{\partial s^{2}}f_{k}(s,\varepsilon)}{f_{k}(s,\varepsilon
)}-\left(  \frac{\frac{\partial}{\partial s}f_{k}(s,\varepsilon)}%
{f_{k}(s,\varepsilon)}\right)  ^{2}\equiv\frac{f_{k}^{(2)}(s,\varepsilon
)}{f_{k}(s,\varepsilon)}-\left(  \frac{f_{k}^{(1)}(s,\varepsilon)}%
{f_{k}(s,\varepsilon)}\right)  ^{2}\label{omega 2 der}%
\end{equation}

Let $\psi(s)$ and $\psi^{(1)}(s)$ denote digamma function and its first
derivative, respectively. Introducing the following denotations:%
\[
p_{k}(s,\varepsilon)\equiv\psi(s)-\psi(s+k+1)+\frac{\varepsilon}{2}\left(
\psi\left(  \frac{s\varepsilon}{2}\right)  +\psi\left(  \frac{1-s\varepsilon
}{2}\right)  -2\log(\pi)\right)
\]%
\[
p_{k}^{(1)}(s,\varepsilon)\equiv\psi^{(1)}(s)-\psi^{(1)}(s+k+1)+\left(
\frac{\varepsilon}{2}\right)  ^{2}\left(  \psi^{(1)}\left(  \frac
{s\varepsilon}{2}\right)  -\psi^{(1)}\left(  \frac{1-s\varepsilon}{2}\right)
\right)
\]
after some elementary but tedious computations we get:%
\begin{equation}
f_{k}^{(1)}(s,\varepsilon)\equiv\frac{\partial}{\partial s}f_{k}%
(s,\varepsilon)=g_{k}(s,\varepsilon)\left(  p_{k}(s,\varepsilon)\zeta\left(
\varepsilon s\right)  +\varepsilon\zeta^{\prime}\left(  \varepsilon s\right)
\right) \label{f 1 der}%
\end{equation}
In a similar way we can obtain the second derivative of $f_{k}(s,\varepsilon
)$. Introducing denotations:%
\[
q_{k}(s,\varepsilon)\equiv p_{k}(s,\varepsilon)^{2}+p_{k}^{(1)}(s,\varepsilon)
\]%
\[
q_{k}^{(1)}(s,\varepsilon)\equiv2\varepsilon p_{k}(s,\varepsilon)
\]
we get:%
\begin{equation}
f_{k}^{(2)}(s,\varepsilon)\equiv\frac{\partial^{2}}{\partial s^{2}}%
f_{k}(s,\varepsilon)=g_{k}(s,\varepsilon)\left(  q_{k}(s,\varepsilon
)\zeta\left(  \varepsilon s\right)  +q_{k}^{(1)}(s,\varepsilon)\zeta^{\prime
}\left(  \varepsilon s\right)  +\varepsilon^{2}\zeta^{^{\prime\prime}}\left(
\varepsilon s\right)  \right) \label{f 2 der}%
\end{equation}

Inserting (\ref{f 1 der}) and (\ref{f 2 der}) into (\ref{omega 1 der}) and
(\ref{omega 2 der}) we get:%
\begin{equation}
\frac{\partial}{\partial s}\omega_{k}(s,\varepsilon)=p_{k}(s,\varepsilon
)-\frac{\pi\varepsilon}{2}\tan\left(  \frac{\pi s\varepsilon}{2}\right)
+\varepsilon\frac{\zeta^{^{\prime}}\left(  \varepsilon s\right)  }%
{\zeta\left(  \varepsilon s\right)  }\label{omega 1 der new}%
\end{equation}%
\begin{equation}
\frac{\partial^{2}}{\partial s^{2}}\omega_{k}(s,\varepsilon)=q_{k}%
(s,\varepsilon)-\left(  \frac{\pi\varepsilon}{2}\tan\left(  \frac{\pi
s\varepsilon}{2}\right)  \right)  ^{2}+\varepsilon^{2}\left(  \frac
{\zeta^{^{^{\prime\prime}}}\left(  \varepsilon s\right)  }{\zeta\left(
\varepsilon s\right)  }-\left(  \frac{\zeta^{^{\prime}}\left(  \varepsilon
s\right)  }{\zeta\left(  \varepsilon s\right)  }\right)  ^{2}\right)
\label{omega 2 der new}%
\end{equation}

Having explicitly calculated derivatives of the integrand we are ready to
apply the saddle point method (\ref{saddle point method}). First we have to
find the location of saddle points. Equating (\ref{f 1 der}) or
(\ref{omega 1 der new}) to zero gives:%
\begin{equation}
p_{k}(s,\varepsilon)\zeta\left(  \varepsilon s\right)  +\varepsilon
\zeta^{\prime}\left(  \varepsilon s\right)  =0\label{Saddle equation}%
\end{equation}
Note that for variable $s$\ having large imaginary part we have (cf. e.g.
\cite{Flajolet Vepstas}, formula (20)):%
\begin{align}
\zeta\left(  s\right)   & \sim1\label{Zeta limits}\\
\zeta^{^{\prime}}\left(  s\right)   & \sim0\nonumber\\
\zeta^{^{^{\prime\prime}}}\left(  s\right)   & \sim0\nonumber
\end{align}

The above approximations seem very radical and illegitimate, because the zeta
function seems to disappear from the reasoning at this stage. Nevertheless,
they are satisfied with accuracy to many significant digits along the
integration path which, let us recall, can be shifted arbitrarily far to the
right. After all, the zeta is present there, at least by its functional
equation (\ref{Functional equation}), which has been used above.

Therefore, instead of (\ref{Saddle equation}), we simply get:%
\begin{equation}
3+2k+s\varepsilon\left(  2\ln\frac{2\pi}{s\varepsilon}\pm i\pi\right)
=0\label{Saddle equation 1}%
\end{equation}
(One has to be careful with logarithms of complex arguments so as not to
ignore a case and therefore not miss a solution.) Equation
(\ref{Saddle equation 1}) may be solved explicitly with respect to $s$ giving
the complex location of $k^{\text{th}}$ saddle point (for a given small
parameter $\varepsilon$):%
\begin{equation}
s_{k}=\frac{k+\frac{3}{2}}{\varepsilon W\left(  \pm\frac{k+\frac{3}{2}}{2\pi
i}\right)  }\label{Saddles}%
\end{equation}
where $W$ is the Lambert function satisfying transcendental functional
equation:%
\[
s=W(s)e^{W(s)}%
\]
Incidentally, formula (\ref{Saddles}) resembles approximate formula for the
imaginary parts $y_{n}$ of complex zeta zeros found by Andr\'{e} LeClair (see
\cite{LeClair}, formula (22)):%
\[
y_{n}=\frac{n-\frac{11}{8}}{W\left(  \frac{n-\frac{11}{8}}{e}\right)  }%
\]
From (\ref{Saddles}) it is evident that distribution of saddle points on the
complex plane scales as the inverse of parameter $\varepsilon$.%
\begin{figure}[ptb]%
\centering
\includegraphics[height=0.5\textheight]
{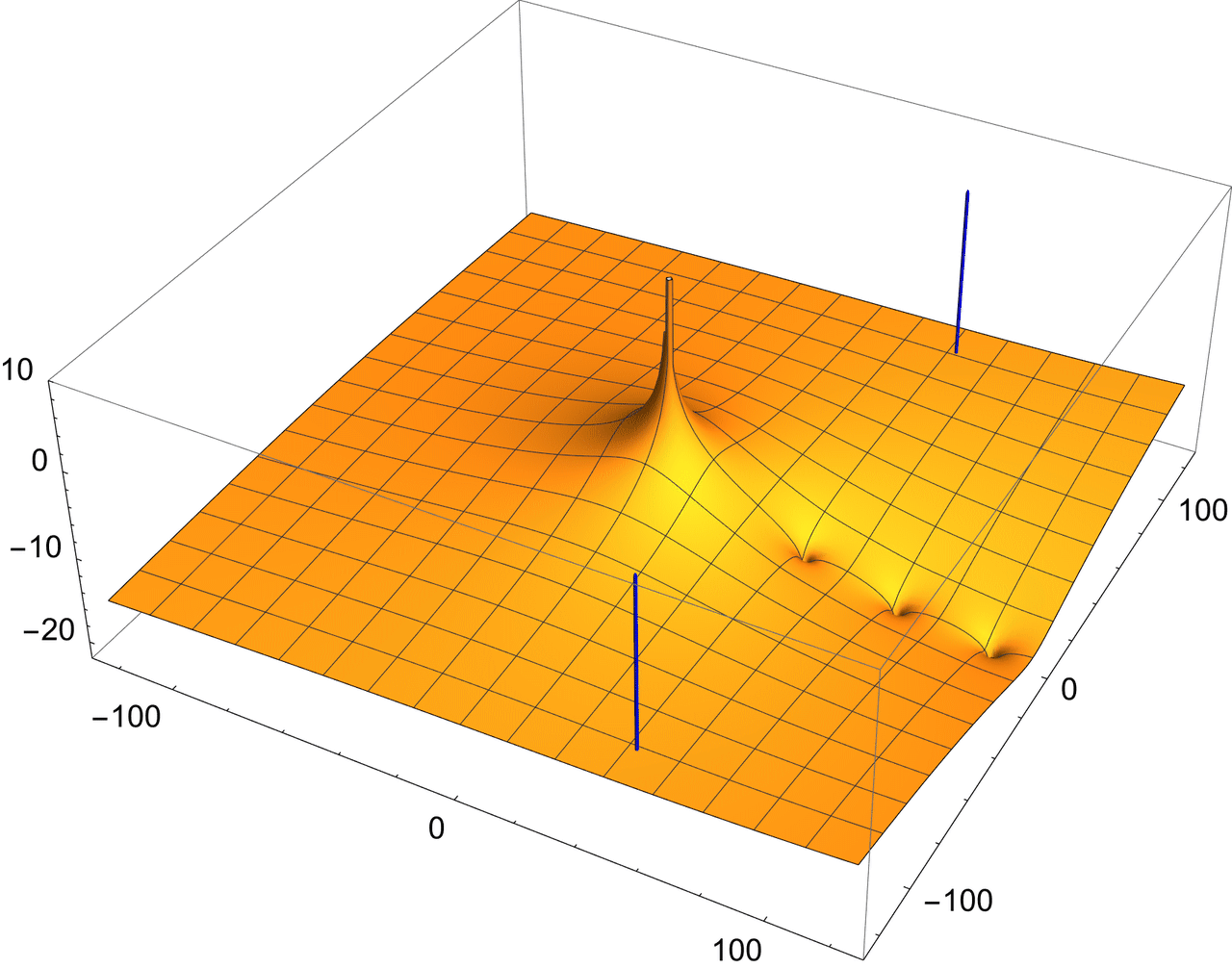}%
\caption{The logarithm of the absolute value of the integrand (\ref{integrand}%
) for $k=2$ and $\varepsilon=2^{-4}$. Positions of saddle points is marked by
vertical lines. The saddle nature of these points is practically invisible due
to the scale of the figure. Also, three singularities for $s=0,-1,-2$ merged
into single peak. Better visualisation is presented on the next Figure 5.}%
\end{figure}
\begin{figure}[ptb]%
\centering
\includegraphics[height=0.85\textheight]
{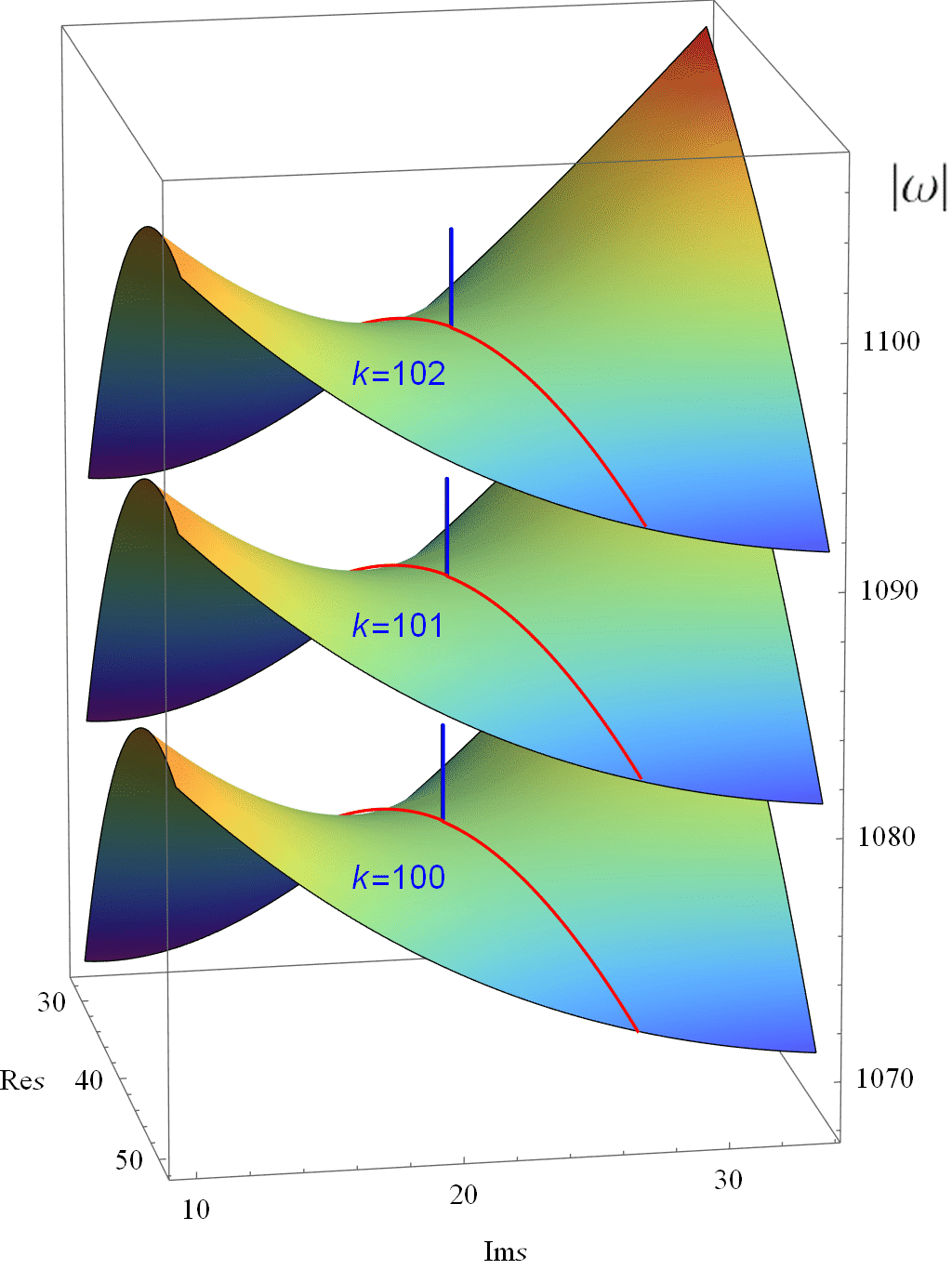}%
\caption{Typical family of fragments of absolute values of the function
$\omega_{k}(s,\varepsilon)$ (\ref{ln integrand}) in the vicinity of saddle
points for $k=100,101$ and $102$. Blue vertical segments mark the position of
the saddles. The red lines are the curves of the steepest descent.}%
\end{figure}

\section{Completion of computations}

Having calculated the second derivative of the integrand and the positions of
the stationary points, we can finally use the theorem
(\ref{saddle point method}) and provide an asymptotic expression for $a_{k}$:%
\begin{equation}
a_{k}(\varepsilon)\simeq-\operatorname{Re}\left(  \frac{k!}{\pi i}\sqrt
{\frac{2\pi}{-\frac{\partial^{2}}{\partial s^{2}}\omega_{k}(s_{k}%
,\varepsilon)}}f_{k}(s_{k},\varepsilon)\right) \label{a_k asymptotics}%
\end{equation}
To get the sought asymptotic formula for $\gamma_{n}$ coefficients, all that
remains is to insert (\ref{a_k asymptotics}) into the general expression
(\ref{gamma gen}) and make some elementary approximations. (As always, Mathematica
procedures such as Limit, Series, etc. save a lot of time and effort while
ensuring that the results are error free.) In particular:%
\begin{equation}
\frac{\partial^{2}}{\partial s^{2}}\omega_{k}(s_{k},\varepsilon)\simeq
\frac{3+2k}{2s^{2}}+\frac{\varepsilon}{s}-\frac{\pi^{2}\varepsilon^{2}}%
{4}\left(  1+\tan\left(  \frac{\pi s\varepsilon}{2}\right)  ^{2}\right)
\label{omega 2 der approx}%
\end{equation}
Using (\ref{Zeta limits}) we can also put:%
\begin{equation}
f_{k}(s,\varepsilon)\simeq g_{k}(s,\varepsilon)\label{f approx}%
\end{equation}
Remembering that for large imaginary part of $s$%
\begin{align*}
\Gamma(s)  & \simeq\sqrt{\frac{2\pi}{s}}e^{-s}s^{s}\\
\cos(s)  & \simeq\frac{e^{-is}}{2}%
\end{align*}
we have:%
\begin{equation}
g_{k}(s,\varepsilon)\simeq\frac{1}{2}\pi^{\frac{1}{2}-s\varepsilon}%
s^{-k-2}\left(  2s-k-k^{2}\right)  \frac{\Gamma(\frac{s\varepsilon}{2}%
)}{\Gamma(\frac{1-s\varepsilon}{2})}\label{g approx}%
\end{equation}
It is clear that, since finally $\varepsilon$ tends to zero, it is sufficient
to take only the first term in (\ref{gamma gen})%
\begin{equation}
\gamma_{n}\simeq\frac{a_{k}(\varepsilon)}{\varepsilon^{n}}\quad\quad
\varepsilon\rightarrow0\label{gamma approx}%
\end{equation}
Inserting to (\ref{gamma approx}) expression for $a_{k}(\varepsilon)$
(\ref{a_k asymptotics}) together with (\ref{omega 2 der approx}),
(\ref{f approx}) and (\ref{g approx}) we finally get:%
\[
\gamma_{n}\sim\sqrt{\frac{2}{\pi}}n!\operatorname{Re}\frac{\Gamma\left(
s_{n}\right)  e^{-cs_{n}}}{\left(  s_{n}\right)  ^{n}\sqrt{n+s_{n}+\frac{3}%
{2}}}%
\]
(In fact, there is always a pair of mutually conjugate saddles but
contributions due to their imaginary parts cancels.)

It is probably quite astonishing that after making so many approximations the
final formula for $\gamma_{n}$ works so well as computer experiments show
convincingly. As expected, in this formula there is no longer the auxiliary
parameter $\varepsilon$, which fulfilled its important but temporary role in
numerical computations (with the help of formula (\ref{gamma gen})), and finally
simply get shortened.

\section{Summary of results}

Let's collect the final results. Let $c$ be a complex constant:%
\[
c=\log(2\pi)+\frac{\pi}{2}i=\log(2\pi i)
\]

Now asymptotics of Stieltjes constants when $n\rightarrow\infty$ (in practice
it suffices that $n\gg0$) is:%
\begin{equation}
\gamma_{n}\sim\sqrt{\frac{2}{\pi}}n!\operatorname{Re}\frac{\Gamma\left(
s_{n}\right)  e^{-cs_{n}}}{\left(  s_{n}\right)  ^{n}\sqrt{n+s_{n}+\frac{3}%
{2}}}\label{asymptotics 0}%
\end{equation}
where complex saddle points are (note that now there is no $\varepsilon$ which
get shortened):%
\begin{equation}
s_{n}=\frac{n+\frac{3}{2}}{W\left(  \pm\frac{n+\frac{3}{2}}{2\pi i}\right)
}\label{Saddles 0}%
\end{equation}
The $250$ initial values of the complex saddles (\ref{Saddles 0}) are shown in
the Figure 6. Very good agreement of approximated values calculated using
(\ref{asymptotics 0}) with actual values of $\gamma_{n}$ is shown in Figure 7.%
\begin{figure}[ptb]%
\centering
\includegraphics[height=0.5\textheight]
{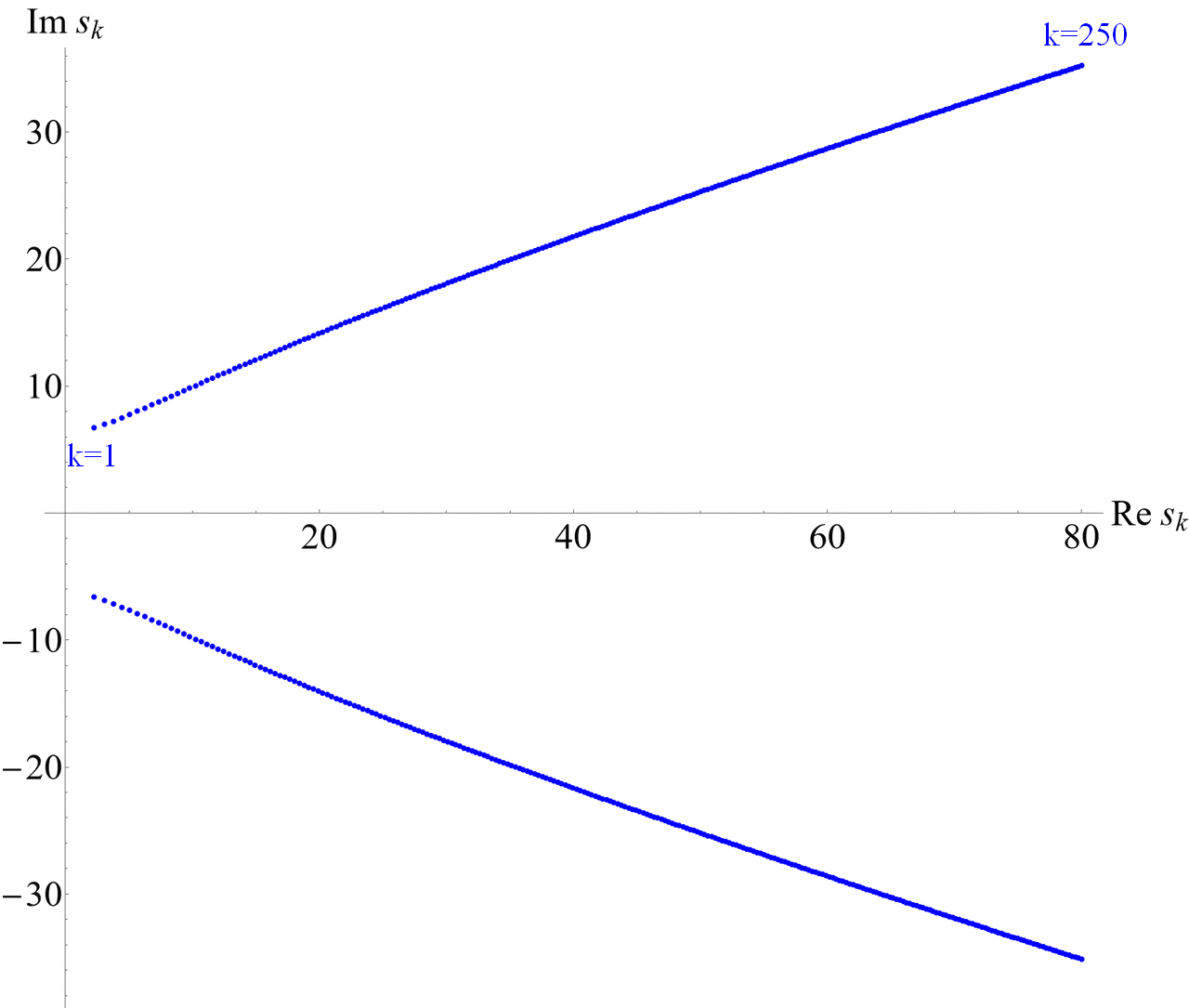}%
\caption{Distribution of 250 initial saddle points on the complex plane. There
are two symmetrical branches, the lower one is the complex conjugate of the
upper one.}%
\end{figure}
\begin{figure}[ptb]%
\centering
\includegraphics[height=0.4\textheight]
{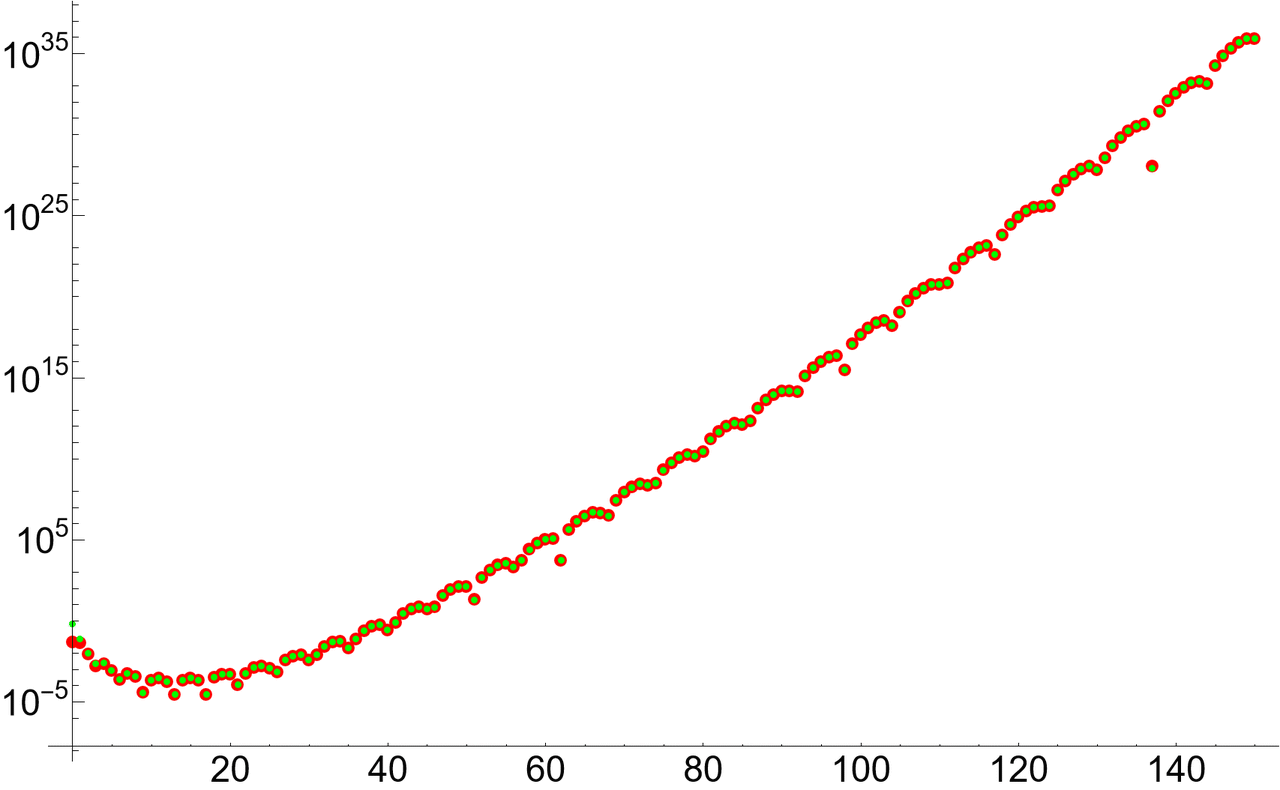}%
\caption{Comparison of absolute values of actual Stieltjes coefficients (green
dots) with those calculated from asymptotic formula (\ref{asymptotics 0}) (red
dots) shows good agreement (except $\gamma_{0}$), even for that "unruly" value
$n=137$.}%
\end{figure}
\begin{figure}[ptb]%
\centering
\includegraphics[height=0.4\textheight]
{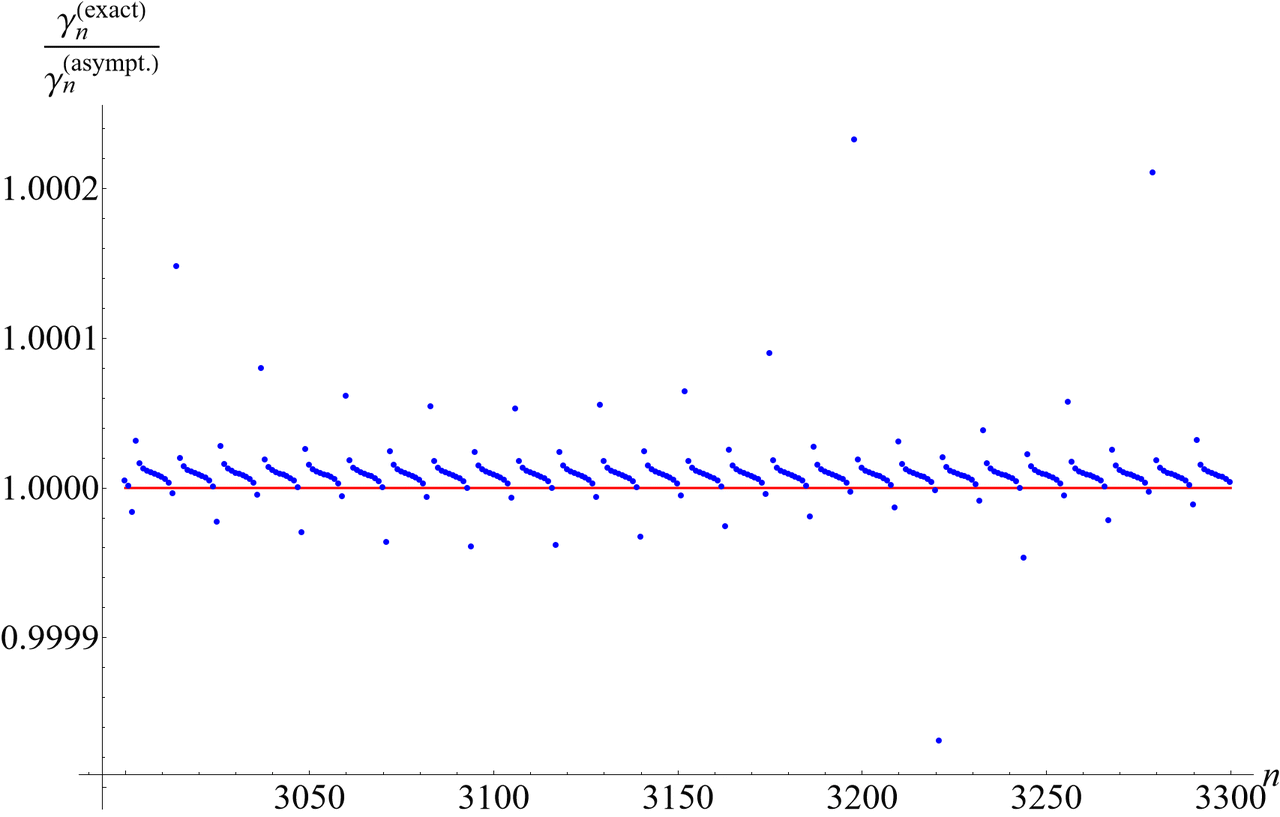}%
\caption{Unfortunately, the good impression after looking at Figure 7
diminishes a bit when we look at the graph of the ratio of the exact
$\gamma_{n}$ values to the asymptotic values (\ref{asymptotics 0}) for example
in the range of $n=3000-3300$. Although this ratio is very close to one, with
an accuracy generally better than $10^{-4}$, there are distinct, periodic
structures: points are arranged on certain curves resembling the family of
cotangent functions. But it is in these structures that the essence of
Riemann's zeta is contained, including the Riemann hypothesis, i.e. what was
rejected when the approximations (\ref{Zeta limits}) were made. Also note,
which is somewhat surprising, that the blue points lie slightly above the red
straight line that represents the value of one. But, as the saying goes, when
one door shuts, another one opens. And indeed: this result opens up a whole
new field for very fruitful research on the Stieltjes constants, which will be
the subject of the next publication.}%
\end{figure}

\section{Application: Signs of $\gamma_{n}$}

As a by-product of these intricate computations, we can get a compact
expression for the signs of the Stieltjes constants. Formula
(\ref{asymptotics 0}) hides the characteristic behavior of Stieltjes constants
when $n$ grows, that is large and growing oscillations with diminishing
frequency superimposed on the strongly growing trend. This behavior may be
demonstrated as follows. Recall higher order Stirling formula for $\Gamma(x)
$:%
\[
\Gamma(x)\simeq\frac{1}{6}\sqrt{\frac{\pi}{2}}e^{-x}x^{x-\frac{3}{2}}(12x+1)
\]
Applying it to $\Gamma\left(  s_{n}\right)  $ in (\ref{asymptotics 0}) we get:%
\begin{equation}
\gamma_{n}\simeq2n!\operatorname{Re}\frac{\left(  s_{n}\right)  ^{s_{n}%
-n-\frac{3}{2}}(s_{n}+\frac{1}{12})e^{-(c+1)s_{n}}}{\sqrt{n+s_{n}+\frac{3}{2}%
}}\label{asymptotics 2}%
\end{equation}
For $n\gg1$ fractions $\frac{1}{12}$ and $\frac{3}{2}$ under the square root
may be neglected since $s_{n}$ grows fast with $n$:%
\begin{align*}
\gamma_{n}  & \simeq2n!\operatorname{Re}\frac{\left(  s_{n}\right)
^{s_{n}-n-\frac{1}{2}}e^{-(c+1)s_{n}}}{\sqrt{n+s_{n}}}=\\
& =2n!\operatorname{Re}\frac{\exp\left[  (s_{n}-n-\frac{1}{2})\ln\left(
s_{n}\right)  \right]  e^{-(c+1)s_{n}}}{\exp\left[  \frac{1}{2}\ln\left(
n+s_{n}\right)  \right]  }=\\
& =2n!\operatorname{Re}\exp\left[  (s_{n}-n-\frac{1}{2})\ln\left(
s_{n}\right)  -\frac{1}{2}\ln\left(  n+s_{n}\right)  -(c+1)s_{n}\right]
\end{align*}

Applying once again Stirling formula to $n!$ we have:%
\begin{equation}
\gamma_{n}\simeq\sqrt{8\pi}\operatorname{Re}\exp\left[  \frac{1}{2}%
\ln(n)+n\left(  \ln(n)-1\right)  +(s_{n}-n-\frac{1}{2})\ln\left(
s_{n}\right)  -\frac{1}{2}\ln\left(  n+s_{n}\right)  -(c+1)s_{n}\right]
\label{gamma approx 1}%
\end{equation}
Introducing finally complex "phase" as:%
\begin{equation}
\varphi_{n}\equiv\frac{1}{2}\ln(8\pi)-n+(n+\frac{1}{2})\ln(n)+(s_{n}%
-n-\frac{1}{2})\ln\left(  s_{n}\right)  -\frac{1}{2}\ln\left(  n+s_{n}\right)
-(c+1)s_{n}\label{phase}%
\end{equation}
we get particularly simple expression:%
\begin{equation}
\gamma_{n}\simeq\operatorname{Re}\left[  e^{\varphi_{n}}\right]
=e^{\operatorname{Re}\varphi_{n}}\cos\left(  \operatorname{Im}\varphi
_{n}\right) \label{gamma approx 2}%
\end{equation}%
\begin{figure}[ptb]%
\centering
\includegraphics[height=0.7\textheight]
{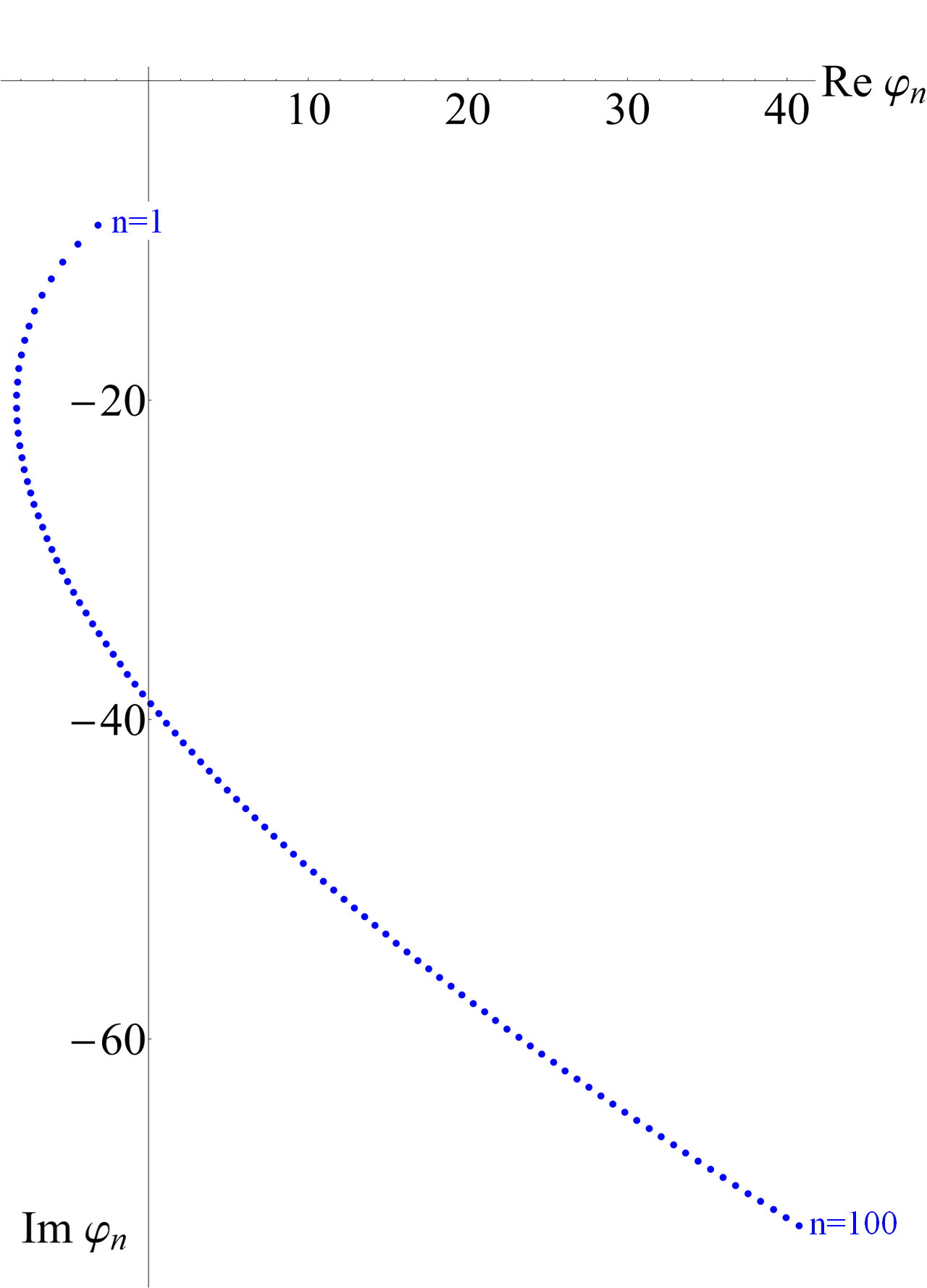}%
\caption{Distribution of complex values of phase $\varphi_{n}$ given by
(\ref{phase}) for $n=1,2,...,100$. It is clear that thay lie along certain
smooth curve. It is also obvious that the exponent of the real part of the
phase (\ref{phase}) controls the rapid growth of $\gamma_{n}$, while the
cosine of the imaginary part of the phase is responsible for the oscillations
of decreasing frequency.}%
\end{figure}
Formula (\ref{gamma approx 2}) gives almost as good approximation as
(\ref{asymptotics 0}) but it shows in a manifest way mentioned above basic
properties of $\gamma_{n}$ (trend and oscillations). It is then clear that the
statement quoted at the beginning that "Stieltjes constants [...] change signs
in a complex pattern" \cite{Wikipedia} is not true. In particular, one can
quickly calculate sign of $\gamma_{n}$, even for extremely high $n$,\ since it
is obviously equal to the sign of $\cos\left(  \operatorname{Im}\varphi
_{n}\right)  $ and the phase (\ref{phase}) can be computed effectively for $n$
at least up to $10^{1,000,000}$. (See \cite{OEIS} for extensive computations
of signs of Stieltjes constants using the above formulas.) For example:

\bigskip%

\begin{tabular}
[c]{ll}%
$n$ & sign of $\gamma_{n}$\\
$10^{10}$ & $+1$\\
$10^{100}$ & $+1$\\
$10^{1000}$ & $+1$\\
$10^{10\,000}$ & $-1$\\
$10^{100\,000}$ & $-1$\\
$10^{1,000\,000}$ & $+1$%
\end{tabular}

\section{Appendix - samples of Mathematica notebooks}

As mentioned in the main text, Wolfram's Mathematica \cite{Wolfram
Mathematica} made very tedious and convoluted computations much easier and
ensured that there were no mistakes in them. This program was used very
intensively -- for symbolic transformations and in terms of its enormous
purely numerical capabilities and finally for its rich graphical presentations
of the obtained results. Figure 9 is an example of how well Mathematica is
doing to check that the contour integral (\ref{alpha_k integral 0}) is indeed
equal to the binomial alternating sum (\ref{a_k zeta}). I cannot imagine how
to verify this fact with such high precision without computer support. Another
example: Figure 10 shows how Mathematica solves the transcendental equations
(\ref{Saddle equation 1}).%
\begin{figure}[ptb]%
\centering
\includegraphics[height=0.8\textheight]
{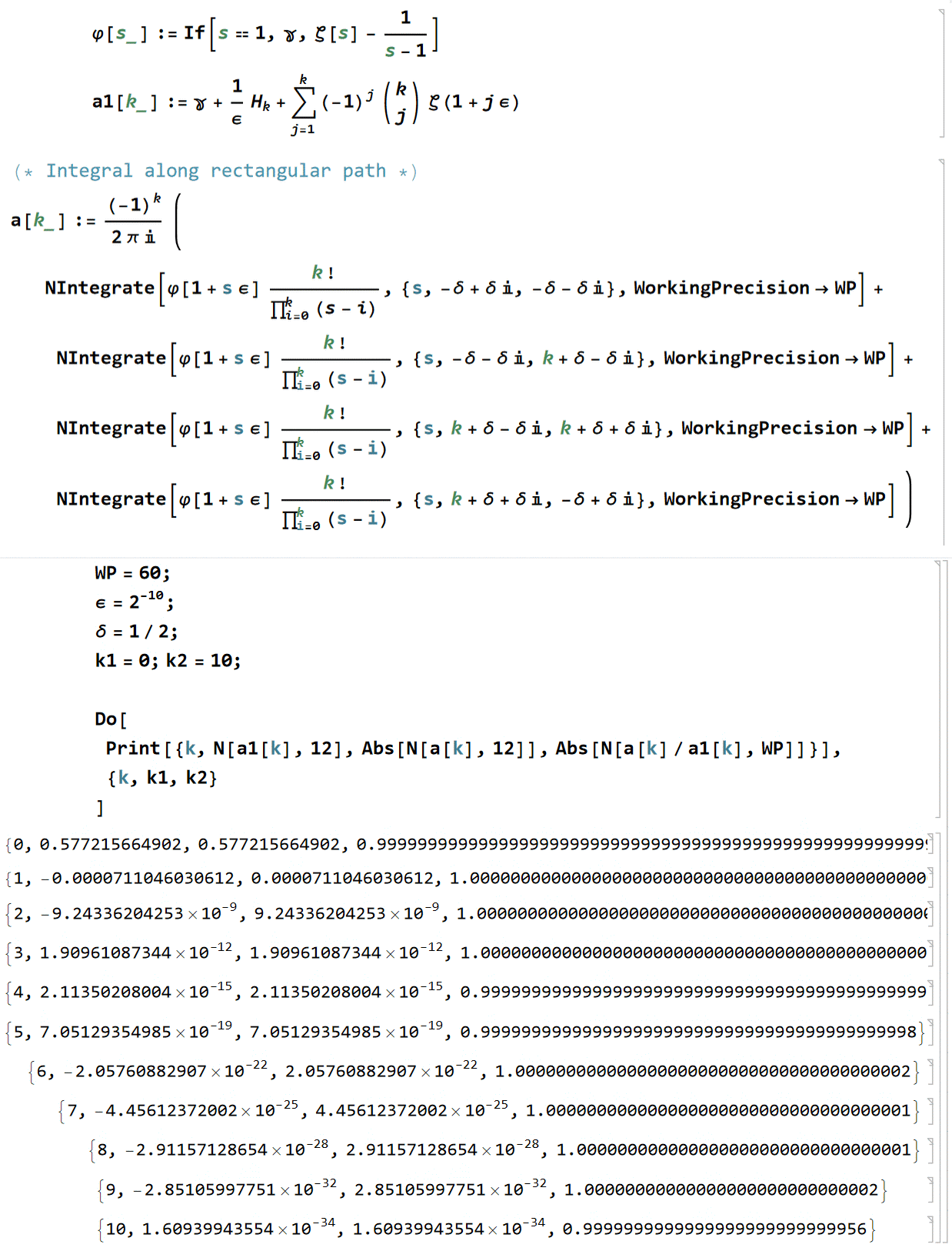}%
\caption{Checking the N\o rlund-Rice formula (\ref{Noerlund-Rice}) using
Mathematica.}%
\end{figure}
\begin{figure}[ptb]%
\centering
\includegraphics[height=0.35\textheight]
{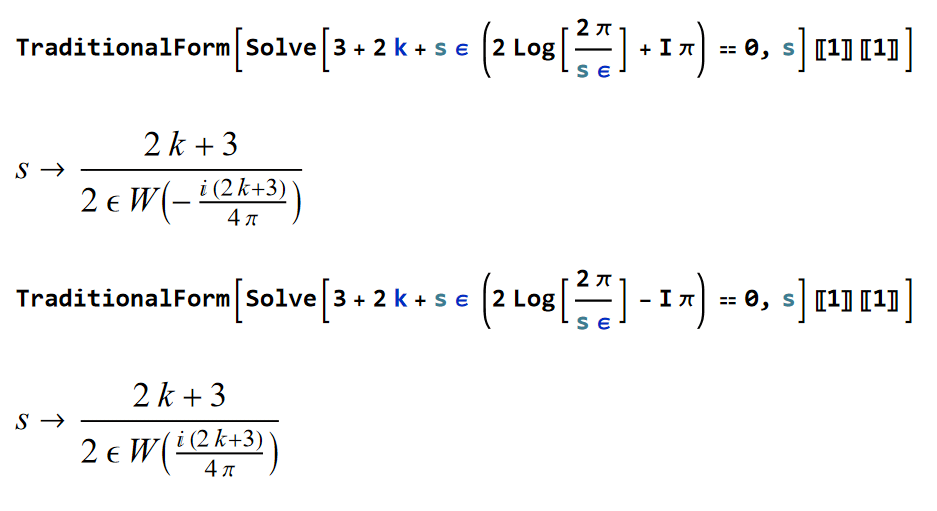}%
\caption{Illustration of the \textbf{Solve} procedure capabilities applied to
equations (\ref{Saddle equation 1}).}%
\end{figure}

\end{document}